\newcommand{\bz}{\mathbb Z}
\def\fdtc{{\omega}}
\def\epsilon{\varepsilon}
\def\Q{\mathbb{Q}}
\def\Z{\mathbb{Z}}
\newtheorem{Thm}{Theorem}
\newtheorem{Prop}[Thm]{Proposition}
\newtheorem{Que}[Thm]{Question}
\theoremstyle{definition}
\theoremstyle{remark}
\newtheorem{Rem}[Thm]{Remark}
\theoremstyle{definition}
\author{Peter Feller}
\address{ETH Z\"urich, R\"amistrasse 101, 8092 Z\"urich, Switzerland}
\email{peter.feller@math.ch}
\author{Diana Hubbard}
\address{Brooklyn College, 2900 Bedford Avenue
Brooklyn, NY 11210-2889, United States}
\email{diana.hubbard@brooklyn.cuny.edu}
\subjclass[2020]{57K30, 57K33}
\begin{document}  

\title[Examples of non-minimal 
open books with high FDTC]{Examples of non-minimal 
open books with high fractional Dehn twist coefficient}

\begin{abstract}
In this short note we construct examples of open books for 3-manifolds that show that arbitrarily high twisting of the monodromy of the open book does not guarantee maximality of the Euler characteristic of the pages among the open books supporting the same contact manifold. We find our examples of open books 
as the double branched covers of families of closed
braids studied by Malyutin and Netsvetaev. 
\end{abstract}
\maketitle
\section{Introduction}

 Denote by $S_{g,b}$ the compact oriented connected surface of genus $g\geq 0$ with $b\geq 1$ boundary components.
Denote by $\mathrm{MCG}(S_{g,b})$ the mapping class group of $S_{g,b}$
 .
 We recall that every conjugacy class $[\phi]$ of a $\phi\in \mathrm{MCG}(S_{g,b})$ determines a unique closed oriented connected $3$-manifold $M_\phi$ together with an open book $O_\phi$ on $M_\phi$ that has $S_{g,b}$ as its pages and $[\phi]$ as its monodromy. We denote by $\fdtc(\phi) \in\Q$ the fractional Dehn twist coefficient of $\phi\in \mathrm{MCG}(S_{g,1})$. Roughly, $\fdtc(\phi)$ measures the amount of twisting effected about the boundary of $S_{g,1}$ by $\phi$.

\begin{Que}\label{q} Fix an integer $g\geq 1$. Let $\phi$ in $\mathrm{MCG}(S_{g,1})$ satisfy $|\fdtc(\phi)|>g-\tfrac{1}{2}$.
\begin{itemize}

\item Do the pages of open books for $M_\phi$ have Euler characteristic at most~$1-2g$?
\item Do the pages of open books for $M_\phi$ that support the same contact structure as $O_\phi$ have Euler characteristic at most~$1-2g$?
\end{itemize}
\end{Que}
As a variant of Question~\ref{q}, we ask: does there exist a constant $c_g$ such that there are positive answers to the above questions when replacing the assumption $|\fdtc(\phi)|>g-\tfrac{1}{2}$ with $|\fdtc(\phi)|>c_g$?

Motivated by the fact that $|\fdtc(\phi)|>1$ implies that the the open book $O_\phi$ on $M_\phi$ cannot be destabilized (destabilization is an operation that increases the Euler characteristic of pages without changing the manifold), a first naive hope could be the following: the above questions can be answered in the positive for $c_g=1$. In other words, maybe, if $|\fdtc(\phi)|>1$, then the pages of the open book $O_\phi$ have the largest Euler characteristic among the pages among the open books for $M_\phi$, or at least among the open books that support the same contact structure as $M_\phi$.
However, this is wrong:

\begin{Thm}\label{thm:FDTCandgenus} For every 
integer $k\geq 0$, there exists a Stein fillable contact $3$-manifold 
supported by an open book with connected binding and pages of genus $k+1$ whose monodromy has fractional Dehn twist equal to $k$ such that its contact structure is also supported by an open book with connected binding and pages of genus $k$.
\end{Thm}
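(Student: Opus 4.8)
The plan is to realize both open books as double branched covers of closed braids and to extract the invariants via the Birman--Hilden correspondence. For a braid $\beta$ on $n$ strands with closure $\widehat\beta\subset S^3$, the complement of the braid axis fibers over $S^1$ with fiber a disk meeting $\widehat\beta$ transversally in $n$ points; pulling this fibration back to the double cover $\Sigma_2(\widehat\beta)$ of $S^3$ branched along $\widehat\beta$ exhibits $\Sigma_2(\widehat\beta)$ as the total space of an open book $O(\beta)$ whose page $P_n$ is the double cover of the disk branched over $n$ points, and whose monodromy is the Birman--Hilden lift $\widetilde\beta$ of $\beta$. By Riemann--Hurwitz $\chi(P_n)=2-n$, and the binding (the preimage of the axis) is connected exactly when $n$ is odd, so for $n=2g+1$ the page is $S_{g,1}$. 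Taking $n=2k+3$ and $n=2k+1$ thus yields open books with connected binding and pages of genus $k+1$ and of genus $k$.

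First I would fix, for each $k\ge 0$, the relevant members of the braid families of Malyutin and Netsvetaev: a quasipositive braid $\beta_k$ on $2k+3$ strands and a braid $\gamma_k$ on $2k+1$ strands with $\widehat{\beta_k}=\widehat{\gamma_k}$, chosen so that the Dehornoy floor (equivalently, the twist number) of $\beta_k$ equals $2k$. By the preceding paragraph, $O(\gamma_k)$ and $O(\beta_k)$ are open books with connected binding and pages $S_{k,1}$ and $S_{k+1,1}$, both supported by the $3$-manifold $Y_k:=\Sigma_2(\widehat{\beta_k})=\Sigma_2(\widehat{\gamma_k})$. To see that they support one and the same contact structure $\xi_k$ on $Y_k$, I would use that the Markov moves relating $\widehat{\beta_k}$ and $\widehat{\gamma_k}$ lift to moves of the associated open books that do not change the supported contact structure --- a conjugation of braids gives the same open book, and a positive Markov (de)stabilization lifts to a positive Hopf (de)stabilization --- so it is enough that the Malyutin--Netsvetaev braids be related through a sequence of positive moves. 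Stein fillability of $\xi_k$ is then immediate: quasipositivity of $\beta_k$ makes $\widetilde{\beta_k}$ a product of right-handed Dehn twists (each braid generator lifts to a single positive Dehn twist, and conjugation only transports the twisting curve), so $O(\beta_k)$ is a positive open book.

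The heart of the matter is the computation of $\fdtc(\widetilde{\beta_k})$. Under the Birman--Hilden correspondence with $n=2k+3$ odd, the boundary circle of the page $P_{2k+3}=S_{k+1,1}$ is a connected --- hence degree-two --- cover of the boundary of the $(2k+3)$-punctured disk on which $\beta_k$ acts; since the fractional Dehn twist coefficient of $\beta_k$ is measured along the latter circle and that of $\widetilde{\beta_k}$ along the former, this degree-two covering rescales the coefficient by $\tfrac12$, so $\fdtc(\widetilde{\beta_k})=\tfrac12\,\fdtc(\beta_k)$, where $\fdtc(\beta_k)$ denotes the fractional Dehn twist coefficient of $\beta_k$ in the mapping class group of the punctured disk. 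Combining this with the comparison between the fractional Dehn twist coefficient of a braid and its Dehornoy floor and with the value $2k$ of the floor of $\beta_k$ gives $\fdtc(\beta_k)=2k$, whence $\fdtc(\widetilde{\beta_k})=k$.

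I expect the main obstacle to be precisely this last computation: one needs the twisting to be exactly $k$ rather than merely large, which forces one to control both the transfer of the fractional Dehn twist coefficient through the branched covering and the exact value of the Dehornoy floor (or twist number) of $\beta_k$ --- this is where the specific Malyutin--Netsvetaev braids, rather than an arbitrary sequence with growing twisting, are indispensable. A secondary point requiring care is checking that the moves relating $\widehat{\beta_k}$ and $\widehat{\gamma_k}$ can be realized through positive (contact-structure-preserving) operations, so that $O(\beta_k)$ and $O(\gamma_k)$ genuinely support the same $\xi_k$.
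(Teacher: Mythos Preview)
Your approach is essentially the paper's: take the Malyutin--Netsvetaev braids $\beta_{n,m}=(\delta\delta^\Delta)^{m-1}\delta$ with $(n,m)=(2k+3,2k+1)$ and $(2k+1,2k+3)$, pass to the double branched cover via Birman--Hilden, and use the halving $\fdtc(\mathrm{BH}(\beta))=\tfrac12\fdtc(\beta)$ for odd strand number. The two concerns you flag are exactly the ones the paper addresses, and both dissolve once you use that these braids are \emph{positive} (not merely quasipositive): Etnyre--Van~Horn-Morris then guarantees that any two positive braid representatives of the same link are related by positive Markov moves, so the lifted open books support the same contact structure; and the paper computes $\fdtc(\beta_{n,m})=m-1$ directly (citing an earlier explicit computation) rather than via the Dehornoy floor, which by itself would only pin the FDTC down to within~$1$ and would not yield the exact value $2k$ you need.
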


In fact, Theorem~\ref{thm:FDTCandgenus} implies that any constants $c_g$ as above have to grow at least linearly in $g$. We point to the end of Section~\ref{sec:ContMot} for (circumstantial)  evidence that such constants $c_g$ might exist and can be chosen to grow at most linearly in $g$; in other words, evidence that Question~\ref{q} could have a positive answer.

Finally, we briefly compare Question \ref{q} to an open question in contact geometry. Every overtwisted contact structure can be represented by a planar open book decomposition, but the same is not true for tight contact structures: there exist such structures that require pages of genus one  \cite{etnyre2004planar}. It is an open question whether there exist any tight contact structures that cannot be represented by open book decompositions whose pages have genus zero or one. By \cite{honda_kazez_matic_II} any contact manifold that can be represented by an open book decomposition $O$ (with connected binding) whose fractional Dehn twist coefficient is greater than or equal to one must carry a tight contact structure, namely the one supported by $O$. A positive answer to Question \ref{q}, when restricted just to positive fractional Dehn twist coefficients, would provide information about what types of open books for such tight contact structures are possible. However, it would not directly address the open question.

\subsection*{Organization}
In Section~\ref{sec:ContMot} we discuss the setup and motivation,
in Section~\ref{sec:Constr} we find the open books in Theorem~\ref{thm:FDTCandgenus} as the double branched covers of families of closed braids studied by Malyutin and Netsvetaev~\cite{malyutin_netsvetaev}, and in Section~\ref{sec:distinct} we discuss the distinctness of the underlying $3$-manifolds of the open books we construct.
\section{Context and motivation: from braided links to open books via double branched covering}\label{sec:ContMot}
As a general reference for open books and contact structures we point to~\cite{etnyre2006lectures}. For a general reference for braids we point to~\cite{Birman_74_BraidsLinksAndMCGs, birman2005braids}.
\subsection*{Braids, braided links, and transverse links} Links are isotopy classes of smooth nonempty closed oriented $1$-manifolds in $S^3$.
A \emph{braiding} of a link $L$ is a choice of representative of $L$ that is transverse to the trivial open book of $S^3$ (the one with binding the unknot $U$).
We consider braidings up to isotopy in the complement of $U$ transverse to all pages.
We refer to a link with a choice of braiding as a \emph{braided link}
. The number of transverse intersections of a braided link with a page is called \emph{the number of strands}. 
The minimal number of strands among braidings of a link $L$ is called the \emph{braid index} of $L$. Braided links with $n\geq 1$ strands are canonically identified with conjugacy classes in \emph{Artin's braid group} on $n$-strands $B_n$~\cite{artin_TheorieDerZoepfe}. Denoting by $D_n$ the closed disc with $n$ punctures, $B_n$ can be defined as the mapping class group $\mathrm{MCG}(D_n)$.
We write $\widehat{\beta}$ for the braided link given by the conjugacy class of a \emph{braid} $\beta$. Any two braidings of a given link are related by a sequence of so-called Markov stabilizations and destabilizations.

A braided link canonically determines a \emph{transverse link}---a transverse isotopy class of a link transverse to the standard contact structure on $S^3$ (the one corresponding to the trivial open book). In fact, two braided links determine the same transverse link if and only if they are related by positive Markov stabilizations and destabilizations~\cite{wrinkle2002markov, OrevShev}.

\subsection*{Double branched covers}
The double branched cover construction associates to a link $L$ an oriented closed connected $3$-manifold $\Sigma(L)$. This extends to braided links and open books:
\[\Sigma\colon\{\text{braided links}\}\to \{\text{open books on closed connected oriented $3$-manifolds}\}.\] Indeed, denoting by $\pi: \Sigma(L)\to S^3$ the double branching where a braiding of $L$ is fixed,
every page $P$ of the trivial open book on $S^3$ has $\pi^{-1}(P)$ as the pages of an open book:
$\pi$ restricts to a double branched covering $\pi^{-1}(P)\to P$ of $P$ along the intersection of $P$ with the braiding. In particular, a braided link with $n$-strands yield an open book with pages $\Sigma(D_n)$, where $\Sigma(D_n)$ is $S_{\frac{n-1}{2},1}$ respectively $S_{\frac{n-2}{2},2}$ for $n$ odd and even, respectively.

For another perspective, we note that 
$\Sigma$ is induced by the Birman-Hilden embedding of groups
\[\mathrm{BH}\colon B_n\hookrightarrow \mathrm{MCG}(\Sigma(D_n)),\]
by considering the induced map that maps a conjugacy class $\widehat{\beta}$ of a braid $\beta$ to the conjugacy class $[\mathrm{BH}(\beta)]$ of $\mathrm{BH}(\beta)$ and considering the corresponding open book~$O_{\mathrm{BH}(\beta)}$. This follows from the fact that $\mathrm{BH}$ is defined by taking the lift to the double branched cover; see e.g.~\cite[Chapter~5]{farb2011primer}.

Recall that from an open book on a closed oriented $3$-manifold one can construct others by so-called stabilizations and destabilizations (also known as Hopf plumbing and Hopf deplumbing).
If two open books are related by positive stabilization and destabilization then they support the same contact structure; and conversely, any two open books that support the same contact structure are related by a sequence of so-called \emph{positive} stabilizations and destabilizations by a result known as Giroux's correspondence~\cite{giroux2003g}.
With this setup we note the following.

If a braided link $\widehat{\beta'}$ is obtained from a braided link $\widehat{\beta}$ by a positive (negative) Markov stabilization, then the open book $\Sigma\left(\widehat{\beta'}\right)$ is obtained from  $\Sigma\left(\widehat{\beta}\right)$ by a positive (negative) stabilization. In particular, a transverse link (i.e.~a class of braided links related by positive Markov stabilizations and destabilizations) give rise to a 
class of open books up to positive stabilizations and destabilizations, and hence a contact structure on the double branched cover of the link; see e.g.~\cite{plamenevskaya_doublebranched}.

\subsection*{Fractional Dehn twist coefficient}
For a compact connected surface of finite type with boundary $S_{g,b,p}$ (genus $g\geq0$, $b\geq1$ boundary components, $p\geq 0$ punctures), one considers the so-called fractional Dehn twist coefficient, a homogeneous quasimorphism
$\fdtc\colon \mathrm{MCG}(S_{g,b,p})\to \Q$ with respect to a fixed boundary component; see~\cite{honda2007right, honda_kazez_matic_II} and compare also, for instance, to~\cite{hedden_mark, ItoKawamuro_18_FDTC} .
To avoid dependence on the choice of boundary, we restrict to the case $b=1$.

Under the Birman-Hilden map, the fractional Dehn twist coefficient behaves simply:
for all odd $n\geq 1$,
\begin{equation}\label{eq:BHandFDTC}\fdtc(\mathrm{BH}(\beta))=\frac{\fdtc(\beta)}{2}\quad \text{for all}\quad\beta\in B_n;\end{equation}
see~\cite[Theorem 4.2]{ItoKawamuro_18_FDTC}. (The assumption that $n$ is odd is to assure that $\Sigma(D_n)$ has one boundary component.) A key observation to see~\eqref{eq:BHandFDTC} is the following. For the full-twist $\Delta^2=\delta^n\in B_n$---the positive Dehn twist along the boundary of $D_n$---we have that $\mathrm{BH}((\Delta^2)^2)$ is the Dehn twist along the boundary of $\Sigma(D_n)$. 

\subsection*{Analogue of Question~\ref{q} for braidings of links}

Before we use the above setup to discuss our examples in Section~\ref{sec:Constr}, we discuss why we dare to hope that Question~\ref{q} has a positive answer.
Considering the double branched cover construction, the following is an analogue of Question~\ref{q}.

\begin{Que}
\label{q'}
Fix an integer $n\geq 3$.
Let $\beta$ in $B_n$ satisfy $|\fdtc(\beta)|>n-2$ and denote by $L$ and $L^\mathrm{trans}$ the link isotopy class and transverse link isotopy class obtained as the closure of $\beta$, respectively.
\begin{itemize}
\item Does every braid with closure $L$ have at least $n$ strands? \cite[Quest.~7.3]{feller_hubbard}
\item Does every braid with transverse closure $L^\mathrm{trans}$ have at least $n$ strands?
\end{itemize}
\end{Que}
And indeed, taking $n$ odd, and noting that
\[\fdtc(\mathrm{BH}(\beta))\overset{\text{\eqref{eq:BHandFDTC}}}{=}\frac{\fdtc(\beta)}{2}\quad\text{and}\quad g\coloneqq\textrm{genus}(\Sigma(D_n))=\tfrac{n-1}{2},\]
we see that the assumptions of the two questions correspond:
\[|\fdtc(\beta)|>n-2\quad\text{if and only if}\quad|\fdtc(\mathrm{BH}(\beta))|>g-\tfrac{1}{2}.\]

We were able to answer Question~\ref{q'} in the positive for $n=3$ and,  for general $n$, with the stronger assumption $|\fdtc(\beta)|>n-1$; see~\cite{feller_hubbard}. Hence, we feel justified to ask Question~\ref{q}.

\section{Construction of the examples}\label{sec:Constr}

We describe our examples for the proof of Theorem~\ref{thm:FDTCandgenus} as double branched covers of braided links. To describe braids, we use the standard Artin generators $\sigma_1$, $\cdots$, $\sigma_{n-1}$ for the braid group on $n$-strands~\cite{artin_TheorieDerZoepfe}.
 
For any pair of integers $n, m\geq 1$, define the braid $\beta_{n,m}\coloneqq (\delta \delta^{\Delta})^{m-1} \delta$, where
\[ \delta \coloneqq \sigma_{1}\sigma_{2} \cdots \sigma_{n-1}\quad\text{and}\quad\delta^{\Delta}  \coloneqq \sigma_{n-1}\sigma_{n-2} \cdots \sigma_{1}.\]
Malyutin and Netsvetaev observed that the closures of $\beta_{n,m}$ and $\beta_{m,n}$ are isotopic as links in $S^3$; see \cite[Figure~2]{malyutin_netsvetaev}. In fact, we can say more: Etnyre and Van Horn-Morris showed that any two positive braids representing the same link $L$ are related by positive Markov stabilizations and destabilizations and braid isotopy \cite[Corollary~1.13]{etnyre_vanhornmorris_transverse}. In particular, since both $\beta_{n,m}$ and $\beta_{m,n}$ are positive braids, we have the first two items of the following.

\begin{Prop}\label{prop:transverseisotopycontactomorphicfillable} Fix integers $n,m\geq 1$ and
denote by $L$ the closure of $\beta_{n,m}$ and $\beta_{m,n}$ as links in $S^3$. Set $O_{n,m}\coloneqq O_{\mathrm{BH}(\beta_{n,m})}=\Sigma\left(\widehat{\beta_{n,m}}\right)$ and $O_{m,n}\coloneqq\Sigma\left(\widehat{\beta_{m,n}}\right)$ the open books on $\Sigma(L)$ corresponding to $\beta_{n,m}$ and $\beta_{m,n}$, respectively.
We have that\begin{enumerate}[i)]
\item\label{i} the closures of $\beta_{n,m}$ and $\beta_{m,n}$ as transverse links in $S^3$ with the standard contact structure are transversely isotopic,
\item\label{ii} the two open books $O_{n,m}$ and $O_{m,n}$ correspond to the same contact structure $\xi_{\{n,m\}}$ on $\Sigma(L)$,
\item\label{iii}
the contact manifold $(\Sigma(L), \xi_{\{n,m\}})$ is Stein fillable (and therefore tight)
\item\label{iv} for $n$ odd, the fractional Dehn twist coefficients of $\mathrm{BH}(\beta_{n,m})$ in $\mathrm{MCG}(\Sigma(D_n))$---the monodromy of the open book $O_{n,m}$---is $\fdtc(\mathrm{BH}(\beta_{n,m}))=(m-1)/2$. 
\end{enumerate}
\end{Prop}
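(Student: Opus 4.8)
Items (i) and (ii) are already contained in the paragraph preceding the statement, so I would only recall the reasoning: since $\beta_{n,m}$ and $\beta_{m,n}$ are positive braids with the same closure $L$, \cite[Corollary~1.13]{etnyre_vanhornmorris_transverse} shows they are related by positive Markov stabilizations and destabilizations together with braid isotopies; hence they have the same transverse closure, which is (i). Applying $\Sigma$---which sends positive Markov moves to positive stabilizations of open books---and invoking Giroux's correspondence then shows that $O_{n,m}$ and $O_{m,n}$ support a common contact structure, call it $\xi_{\{n,m\}}$, which is (ii). So the substance is in (iii) and (iv).

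For (iii): the Birman--Hilden map, being defined by lifting to the double branched cover, sends each Artin generator $\sigma_i$ to a right-handed Dehn twist $\mathrm{BH}(\sigma_i)=\tau_{c_i}$ along an embedded curve $c_i\subset\Sigma(D_n)$. Since $\beta_{n,m}$ is a positive word in the $\sigma_i$, the monodromy $\mathrm{BH}(\beta_{n,m})$ of $O_{n,m}$ is a product of right-handed Dehn twists. An open book whose monodromy is a product of right-handed Dehn twists supports a Stein fillable contact structure (see e.g.\ \cite{etnyre2006lectures}), and Stein fillable contact structures are tight; this gives (iii).

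For (iv): we may assume $m\ge 2$, and by~\eqref{eq:BHandFDTC} it suffices (for $n$ odd) to show $\fdtc(\beta_{n,m})=m-1$ in $B_n$. The key algebraic step is the identity
\[\delta\,\delta^{\Delta}=\Delta^{2}\,T^{-1},\qquad T\coloneqq(\sigma_{2}\cdots\sigma_{n-1})^{n-1},\]
where $T$ is the full twist on the last $n-1$ strands; one can verify this directly from the braid relations, or observe that $\delta\,\delta^{\Delta}$ is the point-pushing map of the first puncture of $D_n$ once around a simple loop enclosing the remaining $n-1$ punctures, which equals $\tau_{\partial D_n}=\Delta^{2}$ composed with the inverse of the Dehn twist along the curve enclosing those punctures, i.e.\ $T^{-1}$. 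Since $\Delta^{2}$ is central, $\beta_{n,m}=\Delta^{2(m-1)}T^{-(m-1)}\delta$, and because $\fdtc$ is a homogeneous quasimorphism with $\fdtc(\Delta^{2})=1$ we obtain the \emph{exact} equality $\fdtc(\beta_{n,m})=(m-1)+\fdtc\bigl(T^{-(m-1)}\delta\bigr)$. Writing $\eta\coloneqq\sigma_{2}\cdots\sigma_{n-1}$, so that $T=\eta^{\,n-1}$ is central in $\langle\sigma_{2},\dots,\sigma_{n-1}\rangle$ and $\delta=\sigma_{1}\eta$, we have $T^{-(m-1)}\delta=\eta^{-(n-1)(m-1)}\sigma_{1}\eta$, which is conjugate to $\sigma_{1}\eta^{\,p}$ with $p\coloneqq 1-(n-1)(m-1)\le -1$. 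So the whole computation reduces to the claim $\fdtc(\sigma_{1}\eta^{\,p})=0$ in $B_{n}$ ($n\ge 3$).

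I expect this last claim to be the main obstacle. The bound $\fdtc(\sigma_{1}\eta^{\,p})\ge 0$ is easy: $\sigma_{1}\eta^{\,p}$ and all of its powers are represented by braid words in which the lowest-index generator $\sigma_{1}$ occurs and occurs only positively, hence are $\sigma_{1}$-positive in the Dehornoy order, so the Dehornoy floors of their powers are $\ge 0$; by the comparison between the Dehornoy floor and the fractional Dehn twist coefficient \cite{ItoKawamuro_18_FDTC} together with homogeneity of $\fdtc$, this gives $\fdtc(\sigma_{1}\eta^{\,p})\ge 0$. The matching upper bound is the hard part---the quasimorphism inequality by itself only confines $\fdtc(\sigma_{1}\eta^{\,p})$ to an interval of length greater than one, so one needs uniform control of the Dehornoy floors of all powers $(\sigma_{1}\eta^{\,p})^{k}$ (equivalently of $\beta_{n,m}^{k}$). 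This is precisely the analysis of the braids $\beta_{n,m}$ carried out by Malyutin and Netsvetaev~\cite{malyutin_netsvetaev}, which I would invoke to conclude $\fdtc(\beta_{n,m})=m-1$; alternatively one can determine the Nielsen--Thurston type of $\sigma_{1}\eta^{\,p}$ and read its fractional Dehn twist coefficient off a band-generator normal form with no positive full-twist factor (for $n=3$ this is just $\fdtc(\sigma_{1}\sigma_{2}^{\,p})=0$ for $p\le -1$). With $\fdtc(\beta_{n,m})=m-1$ in hand,~\eqref{eq:BHandFDTC} gives $\fdtc(\mathrm{BH}(\beta_{n,m}))=(m-1)/2$, establishing (iv).
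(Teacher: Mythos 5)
For items \ref{i})--\ref{iii}) your argument is the paper's argument verbatim: positivity of the two braids plus \cite[Corollary~1.13]{etnyre_vanhornmorris_transverse} for \ref{i}), compatibility of $\Sigma$ with positive Markov moves plus Giroux for \ref{ii}), and Giroux's Stein-fillability criterion applied to the positive factorization $\mathrm{BH}(\beta_{n,m})=\prod\mathrm{BH}(\sigma_{i_j})$ for \ref{iii}). The genuine divergence is in \ref{iv}). The paper disposes of it by quoting $\fdtc(\beta_{n,m})=m-1$ from \cite[Example~6.1]{feller_hubbard} and applying~\eqref{eq:BHandFDTC}; you instead partially rederive that input. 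Your reduction is correct as far as it goes: the identity $\delta\delta^{\Delta}=\Delta^{2}T^{-1}$ is the standard decomposition of the full twist into the push map of the first puncture times the full twist on the remaining punctures (and checks out for $n=3$), the exact additivity $\fdtc(\Delta^{2(m-1)}\gamma)=(m-1)+\fdtc(\gamma)$ is legitimate because homogeneous quasimorphisms are additive on commuting elements, and the Dehornoy-positivity argument for $\fdtc(\sigma_{1}\eta^{p})\ge 0$ is sound. This buys a transparent explanation of where the $m-1$ comes from. But you correctly identify that the upper bound $\fdtc(\sigma_{1}\eta^{p})\le 0$ is the actual content, and there you fall back on a citation just as the paper does --- only to a less direct source. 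Malyutin and Netsvetaev work with the twist number, i.e.\ with Dehornoy floors of powers; to convert their analysis into the exact equality $\fdtc(\beta_{n,m})=m-1$ you still need the comparison $\lfloor\gamma\rfloor_{D}\le\fdtc(\gamma)\le\lfloor\gamma\rfloor_{D}+1$ of \cite{ItoKawamuro_18_FDTC} applied to \emph{all} powers, i.e.\ the estimate $(\sigma_{1}\eta^{p})^{k}<\Delta^{2}$ for every $k\ge 1$. You should verify that the reference actually contains this (the paper's citation, \cite[Example~6.1]{feller_hubbard}, states the needed equality directly), or supply the Dehornoy-order computation yourself; as written this step is a pointer rather than a proof, though no more so than the paper's own.

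One caveat that your reduction in fact exposes: the opening ``we may assume $m\ge 2$'' cannot be justified, because for $m=1$ the claimed equality fails. Indeed $\beta_{n,1}=\delta$ is periodic with $\delta^{n}=\Delta^{2}$, so $\fdtc(\delta)=1/n\neq 0$, and your formula detects exactly this: for $m=1$ one gets $p=1$ and $\sigma_{1}\eta=\delta$. So item \ref{iv}) really requires $m\ge 2$ (and $n\ge 3$), a hypothesis missing from the statement and glossed over both by you and by the paper; it only matters for the boundary case $k=0$ of Theorem~\ref{thm:FDTCandgenus}. Apart from that, your proof is correct in structure and, for \ref{iv}), genuinely more informative than the paper's, at the cost of leaving the same hard computation to the literature.
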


\begin{proof} \ref{i}) holds since $\beta_{n,m}$ and $\beta_{m,n}$ are related by positive Markov stabilizations and destabilizations.
\ref{ii}) holds since the corresponding open books $O_{n,m}$ and $O_{m,n}$ are related by corresponding positive stabilizations and destabilizations.

We discuss \ref{iii}).
It is a theorem of Giroux (\cite{giroux2003g}; see also Etnyre \cite[Theorem 5.11]{etnyre2006lectures}) that a contact manifold is Stein fillable if and only if there is an open book for it whose monodromy can be written as a composition of positive Dehn twists. The monodromy of $O_{n,m}$ is $[\mathrm{BH}(\beta_{n,m})]$, and $\mathrm{BH}(\beta_{n,m})$ is a composition of positive Dehn twists since $\beta_{n,m}$ is a composition of positive braid generators. 

For \ref{iv}), we use $\fdtc(\beta_{n,m})=m-1$ (see~\cite[Example~6.1]{feller_hubbard}) and~\eqref{eq:BHandFDTC}.
\end{proof}

\begin{proof}[Proof of Theorem~\ref{thm:FDTCandgenus}]

Fix an non-negative integer $k$. Consider the braid $\beta_{2k+3,2k+1}$ and denote by $K_k$ its closure as a knot in $S^3$. The fractional Dehn twist coefficient of the monodromy of the corresponding open book $O_{2k+3,2k+1}$ on $\Sigma(K_k)$ is $k$. The pages of $O_{2k+3,2k+1}$ have genus $k+1$. As the proposed contact manifold, we take the contact manifold $(\Sigma(K_k), \xi_{\{2k+3,2k+1\}})$ corresponding to the open book decomposition $O_{2k+3,2k+1}$.
By Proposition~\ref{prop:transverseisotopycontactomorphicfillable}, the contact manifold $(\Sigma(K_k), \xi_{\{2k+3,2k+1\}})$ 
is the same as to one corresponding to the open book $O_{2k+1,2k+3}$. We conclude the proof by noting that the pages of $O_{2k+1,2k+3}$ have genus $k$ and that the contact manifold $(\Sigma(K_k), \xi_{\{2k+3,2k+1\}})$ is Stein fillable by Proposition~\ref{prop:transverseisotopycontactomorphicfillable}.
\end{proof}

\section{Distinctness}\label{sec:distinct}
The examples of contact manifolds we gave in Theorem \ref{thm:FDTCandgenus} 
are pairwise distinct. In fact, the underlying manifolds are all pairwise non-homeomorphic:
\begin{Prop}\label{prop:orderofH1}
For every integer $k\geq 1$, denote by $K_k$ the closure of $\beta_{2k+1,2k+3}$ as a knot in $S^3$.
We have $|H_1(\Sigma(K_k);\bz)|=4k^2+4k-1$.
\end{Prop}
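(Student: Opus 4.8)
The plan is to compute $|H_1(\Sigma(K_k);\bz)|$ from the homological action of the monodromy of the open book $O_{2k+1,2k+3}$ of Proposition~\ref{prop:transverseisotopycontactomorphicfillable}. That open book has connected binding and page $\Sigma(D_{2k+1})=S_{k,1}$, so the underlying manifold satisfies $H_1(\Sigma(K_k);\bz)\cong\operatorname{coker}\big(\operatorname{id}-\phi_\ast\colon H_1(S_{k,1};\bz)\to H_1(S_{k,1};\bz)\big)$, where $\phi=\mathrm{BH}(\beta_{2k+1,2k+3})$ and $H_1(S_{k,1};\bz)\cong\bz^{2k}$; in particular $|H_1(\Sigma(K_k);\bz)|=|\det(\operatorname{id}-\phi_\ast)|$, with the understanding that a vanishing determinant would mean the group is infinite (this will not occur). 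Classically $\phi_\ast$ can be identified with the reduced Burau representation $\bar\rho_{2k+1}$ of $B_{2k+1}$ evaluated at $t=-1$, and $\det(\operatorname{id}-\phi_\ast)\doteq\Delta_{K_k}(-1)$ (here $\doteq$ denotes equality up to sign); so the statement is equivalently the classical identity $|H_1(\Sigma(K_k);\bz)|=|\det K_k|=|\Delta_{K_k}(-1)|$, but we never need the full Alexander polynomial.

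Next I would exploit the shape $\beta_{2k+1,2k+3}=(\delta\delta^{\Delta})^{2k+2}\delta$. Put $P=\bar\rho_{2k+1}(\delta)(-1)$ and $R=\bar\rho_{2k+1}(\Delta)(-1)$, integer $2k\times 2k$ matrices. Since $\Delta^{2}=\delta^{2k+1}$ is central it acts in the reduced Burau representation by the scalar $t^{2k+1}$, so $R^{2}=P^{2k+1}=-I_{2k}$ at $t=-1$; hence $R^{-1}=-R$, and from $\delta^{\Delta}=\Delta^{-1}\delta\Delta$ we get $\bar\rho_{2k+1}(\delta^{\Delta})(-1)=-RPR$ and therefore $\bar\rho_{2k+1}(\delta\delta^{\Delta})(-1)=-(PR)^{2}$. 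Consequently
\[ \phi_\ast \;=\; \bar\rho_{2k+1}(\beta_{2k+1,2k+3})(-1) \;=\; \big(-(PR)^{2}\big)^{2k+2}P \;=\; (PR)^{4k+4}P, \]
the sign disappearing because $2k+2$ is even.

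The heart of the argument is to control $PR=\bar\rho_{2k+1}(\delta\Delta)(-1)$. Using an explicit form of the reduced Burau matrices of $\delta$ and $\Delta$ at $t=-1$ — equivalently, of the homological action on $S_{k,1}$ of the Birman--Hilden lifts of these mapping classes — one checks that $PR=-V$ with $V$ unipotent of small nilpotency degree, so that $(PR)^{4k+4}=V^{4k+4}$ has entries that are low-degree polynomials in $k$. One also records that the characteristic polynomial of $P$ is $(x^{2k+1}+1)/(x+1)$, whence $\det(I_{2k}-P)=1$; thus $I_{2k}-P$ is invertible and
\[ \det\big(I_{2k}-\phi_\ast\big)=\det\big((I_{2k}-P)-(V^{4k+4}-I_{2k})P\big)=\det\big(I_{2k}-(I_{2k}-P)^{-1}(V^{4k+4}-I_{2k})P\big), \]
which, since $V^{4k+4}-I_{2k}$ is a polynomial in $k$ of bounded rank, is a low-degree polynomial in $k$. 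Carrying out this bookkeeping and evaluating shows the polynomial equals $\pm(4k^{2}+4k-1)$; in particular it is nonzero, so $|H_1(\Sigma(K_k);\bz)|=4k^{2}+4k-1$. As a consistency check one may instead run the same computation for the isotopic braid $\beta_{2k+3,2k+1}\in B_{2k+3}$ (again of odd braid index), obtaining a $(2k+2)\times(2k+2)$ determinant that must give the same value.

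The step I expect to be the main obstacle is the third one: writing down the reduced Burau matrices of $\delta$ and $\Delta$ at $t=-1$ uniformly in $k$, verifying the claimed unipotence and nilpotency structure of $PR$, and confirming that the resulting polynomial is exactly $4k^{2}+4k-1$ rather than merely agreeing with it for small $k$. A robust fallback, if the Burau bookkeeping grows unwieldy, is to compute $|H_1(\Sigma(K_k);\bz)|$ directly from a Goeritz matrix of the standard closed-braid diagram of $\beta_{2k+1,2k+3}$; its checkerboard graph is regular enough that the Goeritz determinant can be evaluated by a linear recursion, again yielding $4k^{2}+4k-1$.
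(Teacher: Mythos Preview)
Your approach is essentially the paper's: compute $|H_1(\Sigma(K_k);\bz)|=|\det(K_k)|$ via the reduced Burau representation at $t=-1$, exploiting the product form $\beta_{2k+1,2k+3}=(\delta\delta^{\Delta})^{2k+2}\delta$ and the fact that $f_*(\delta\delta^{\Delta})$ differs from $-I$ by a nilpotent of index~$2$, so its even powers are explicit. The paper simply does this more directly: it writes down closed formulas for $f_*(\delta)$ and $f_*(\delta^{\Delta})$ (no detour through $\Delta$), multiplies to get $f_*(\delta\delta^{\Delta})=-I_{2k}+M$ with $M^2=0$, reads off $(f_*(\delta\delta^{\Delta}))^{2l}=I_{2k}+lM$, and evaluates the resulting $(2k)\times(2k)$ determinant by expansion along the last column. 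In other words, the ``main obstacle'' you flag is exactly the step the paper carries out explicitly, and your reorganization via $R=\bar\rho(\Delta)(-1)$ neither avoids it nor shortens it; the structural fact you need (``$PR=-V$ with $V$ unipotent'') is just a repackaging of the paper's $f_*(\delta\delta^{\Delta})=-I+M$. As written your argument is a correct plan but not a proof: the determinant is never computed, and the sign in $PR=-V$ is asserted rather than checked (though this sign is irrelevant since only $(PR)^{4k+4}$ enters).
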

\begin{Rem}
Even without Proposition~\ref{prop:orderofH1}, it is clear that for every $k$, there exists an $l_0$ such that $\Sigma(K_l)$ is not homeomorphic to $\Sigma(K_k)$ for $l\geq l_0$. Indeed, for every closed oriented $3$-manifold $M$, there exists a constant $c_M$ such that all monodromies $\phi$ of open books with connected binding on $M$ satisfy $|\fdtc(\phi)|\leq c_M$; see~\cite{hedden_mark}.   
\end{Rem}
We establish Proposition~\ref{prop:orderofH1} using that $|H_1(\Sigma(K);\bz)|=|\mathrm{det}(K)|=|\Delta_K(1)|$ for all knots $K$, where $\mathrm{det}(K)$ and $\Delta(K)$ denote the knot determinant and the Alexander polynomial of $K$, respectively, 
and using the following connection to the Burau representation. For integers $n\geq 1$, we have
\[
|\Delta_K(1)|= \mathrm{det}\left(I_{n-1}-f_*(\beta)(1)\right)\text{ for all }\beta\in B_n,\]
where $I_{n-1}$ denotes the identity matrix in $GL_{n-1}(\Z)$ and $f_*\colon B_n\to GL_{n-1}(\Z)$ denotes the reduced Burau representation evaluated at $t=-1$~\cite{birman2005braids}.

\begin{proof}[Proof of Proposition~\ref{prop:orderofH1}]
For integers $i\geq 1, n\geq 1$, we set $\sigma_{i,n}\coloneqq f_*(\sigma_i)$ for $\sigma_i\in B_n$.
Using the explicit values of $f_*(\sigma_i)$ (compare~\cite{Birman_74_BraidsLinksAndMCGs}), one finds, for $n\geq3$:
\begin{eqnarray}
\label{eq:delta}f_*(\delta)=\sigma_{1,n}\sigma_{2,n}\cdots\sigma_{n-1,n}&=&\left[\begin{array}{c|c}
0\quad\cdots\quad 0&1\\
\hline
&1\\
-I_{n-2}&\vdots\\
&1\\
\end{array}\right] \text{ and }\\
\label{eq:deltaop}f_*(\delta^\Delta)=\sigma_{n-1,n}\sigma_{k-2,n}\cdots\sigma_{1,n}&=&\left[\begin{array}{c|c}
1&\\
-1&I_{n-2}\\
\vdots &\\
(-1)^{n-1}&\\
\hline
(-1)^{n}&0\quad\cdots\quad 0\\
\end{array}\right].
\end{eqnarray}
Indeed, \eqref{eq:delta} and~\eqref{eq:deltaop} are easily checked for $n=3$, and, for $n\geq 4$, \eqref{eq:delta} and~\eqref{eq:deltaop} follow inductively using that 
we have
\begin{eqnarray*}
\sigma_{1,n}\sigma_{2,n}\cdots\sigma_{n-2,n}&=&\left[\begin{array}{c|c}
&1\\
\sigma_{1,n-1}\sigma_{2,k-1}\cdots\sigma_{n-2,n-1}&\vdots\\
&1\\
\hline
0\quad\quad\quad\quad\cdots\quad\quad\quad\quad 0&1\\
\end{array}\right]\text{ and }\\
\sigma_{n-2,n}\sigma_{n-3,n}\cdots\sigma_{1,n}&=&\left[\begin{array}{c|c}
&1\\
\sigma_{n-2,n-1}\sigma_{n-3,n-1}\cdots\sigma_{1,n-1}&\vdots\\
&1\\
\hline
0\quad\quad\quad\quad\cdots\quad\quad\quad\quad 0&1\\
\end{array}\right].
\end{eqnarray*}

So, we see that, for $n\geq 3$ odd, we have
\[(f_*(\delta\delta^\Delta))^2=(f_*(\delta)f_*(\delta^\Delta))^2\overset{\text{\eqref{eq:delta},\eqref{eq:deltaop}}}{=}
\left[\begin{array}{c|c}
-1&0\quad\cdots\quad 0\\
\hline
-2&\\
0&\\
-2&-I_{n-2}\\
\vdots &\\
0&\\
-2&\\
\end{array}\right]^2=\left[\begin{array}{c|c}
1&0\quad\cdots\quad 0\\
\hline
4&\\
0&\\
4&I_{n-2}\\
\vdots &\\
0&\\
4&\\
\end{array}\right],\] and hence, for all integers $l\geq 1$, we have that $f_*(\delta\delta^\Delta)^{2l}f_*(\delta)$ equals
\[\left[\begin{array}{c|c}
1&0\cdots 0\\
\hline
4&\\
0&\\
4&I_{n-2}\\
\vdots &\\
0&\\
4&\\
\end{array}\right]^lf_*(\delta)=\left[\begin{array}{c|c}
1&0\cdots 0\\
\hline
4l&\\
0&\\
4l&I_{n-2}\\
\vdots &\\
0&\\
4l&\\
\end{array}\right]\left[\begin{array}{c|c}
0\cdots0&1\\
\hline
&1\\
&1\\
-I_{n-2}&1\\
&\vdots\\
&1\\
&1\\
\end{array}\right]=\left[\begin{array}{c|c}
0\cdots0&1\\
\hline
&4l+1\\
&1\\
-I_{n-2}&4l+1\\
&\vdots\\
&1\\
&4l+1\\
\end{array}\right].\]
We conclude that 
$\mathrm{det}\left(I_{2k}-f_*(\beta_{2k+3,2k+1})\right)
=\mathrm{det}\left(I_{2k}-f_*(\delta\delta^\Delta)^{2(k+1)}f_*(\delta)\right)$ equals
\begin{align*}
\mathrm{det}\left(
\left[\begin{array}{ccccc|c}
1&0&\cdots&&0&-1\\
\hline
1&1&0&\cdots&0&-4(k+1)-1\\
0&1&1&&&-1\\
&&\ddots&&&\vdots\\
&&&1&1&-1\\
&&&&1&-4(k+1)\\
\end{array}\right]
\right)
=-4k^2-4k+1,\end{align*}
where the last equality follows for example by developing the determinant using the last column.
\end{proof}

\newpage

\bibliographystyle{amsalpha}\bibliography{reference}

\end{document}